  \newtheorem{thm}{Theorem}[section]
  \newtheorem{lem}{Lemma}[section]
\numberwithin{equation}{section}
\newcommand{\bsquare}{\hbox{\rule{6pt}{6pt}}}  
\newcommand{\mapright}[1]{%
  \smash{\mathop{%
    \hbox to 1cm{\rightarrowfill}}\limits^{#1}}}
    \newcommand{\mapleft}[1]{%
  \smash{\mathop{%
    \hbox to 1cm{\leftarrowfill}}\limits_{#1}}}
\begin{document}

\title{On extendibility of a map induced by Bers isomorphism}


\author{Hideki Miyachi}
\address{Department of Mathematics,
Graduate School of Science, Osaka University,
Machikaneyama 1-1, Toyonaka, Osaka, 560-0043, Japan}
\curraddr{}
\email{miyachi@math.sci.osaka-u.ac.jp}
\thanks{The first author is partially supported by the Ministry of Education, Science, Sports and Culture, Grant-in-Aid for Scientific Research (C), 21540177.}

\author{Toshihiro Nogi}
\address{Osaka City University Advanced Mathematical Institute, 
Sugimoto, Sumiyoshi-ku Osaka 558-8585, Japan}
\email{nogi@sci.osaka-cu.ac.jp}
\thanks{The second author is partially supported by the JSPS Institutional Program for Young Research Overseas Visits
``Promoting international young researchers in mathematics and mathematical sciences led by OCAMI ".}

\subjclass[2010]{Primary 30F60, 32G15, 20F67}

\date{}

\dedicatory{}

\begin{abstract}
Let $S$ be a closed Riemann surface of genus $g(\geqq 2)$ and
set $\dot{S}=S \setminus \{ \hat{z}_0 \}$.
Then we have the composed map $\varphi\circ r$ of 
 a map $r: T(S) \times U \rightarrow F(S)$ and the Bers isomorphism $\varphi: F(S)  \rightarrow T(\dot{S})$,
where $F(S)$ is the Bers fiber space of $S$, $T(X)$ is the Teichm\"uller space of $X$ and $U$ is the upper half-plane.

The purpose of this paper is to show the map 
$\varphi\circ r:T(S)\times U \rightarrow T(\dot{S})$. 
has a continuous extension to some subset of the boundary $T(S) \times \partial U$. 
 \end{abstract}

\maketitle

\section{Introduction}
\subsection{Teichm\"uller space} Let $S$ be a closed Riemann surface of genus $g(\geqq 2)$.
Consider any pair $(R, f)$ of a closed Riemann surface $R$ of genus $g$ and 
a quasiconformal map $f:S \rightarrow R$. 
Two pairs $(R_1, f_1)$ and $(R_2, f_2)$ are said to be {\it equivalent} 
if $f_2\circ f_1^{-1}:R_1 \rightarrow R_2$ is homotopic to a biholomorphic map $h : R_1  \rightarrow R_2$. 
Let $[R, f]$ be the equivalence class of such a pair $(R, f)$. 
We set $$T(S) = \{ [R, f] ~|~f : S \rightarrow R : \text{quasiconformal} \}$$
and call $T(S)$ the {\it Teichm\"uller space} of $S$.

For any $p_1=[R_1,f_1]$, $p_2=[R_2,f_2]\in T(S)$, the \emph{Teichm\"uller
distance} is defined to be
$$
d_T(p_1,p_2)=\frac{1}{2}\inf_g \log K(g)
$$
where $g$ runs over all qusiconformal maps from $R_1$ to $R_2$
homotopic to $f_2\circ f_1^{-1}$ and $K(g)$ means the maximal dilatation of $g$.
The Teichm\"uller space is topologized with the Teichm\"uller distance. 

It is known that $S$ can be represented as $U/G$ 
where $U$ is the upper half-plane 
and $G$ is a torsion free Fuchsian group. 
Let $L_{\infty}(U, G)_1$ be the space 
of measurable functions $\mu$ on $U$ satisfying 
\begin{itemize}
\item[(1)] $\| \mu \|_{\infty} = \sup_{z \in U}|\mu(z)| <1,$
\item[(2)] $(\mu\circ g)\dfrac{\overline{g}'}{g}$ for all $g \in G$.
\end{itemize}
For any $\mu \in L_{\infty}(U, G)_1$, there is a unique quasiconformal map $w$ of $U$ onto $U$ 
satisfying normalization conditions $w(0)=0, w(1)=1$ and $w(\infty)=\infty$.  
Let $Q(G)$ be the be the set of all normalized quasiconformal map $w$ 
such that $wGw^{-1}$ is also Fushsian. 
We write $w=w_{\mu}.$  
Two maps $w_1,w_2 \in Q(G)$ are said to be {\it equivalent} 
if $w_1 = w_2$ on the real axis $\mathbb{R}$. 
Let $[w]$ be the equivalence class of $w \in Q(G) $.   
We set $$T(G)=\{ [w] ~|~ w \in Q(G) \}$$
and call $T(G)$ the {\it Teichm\"uller space} of $G$. 

Then we have a canonical bijection
\begin{equation}
\label{eq:canonical_bijection}
T(G)\ni [w_{\mu}]\mapsto [U/G_{\mu}, f_{\mu}] \in T(S)
\end{equation}
where $G_{\mu}=w_{\mu}G w_{\mu}^{-1}$ 
and $f_{\mu}$ is the map induced by
$w_{\mu} :U \rightarrow U$.
Throughout this paper, 
we always identify $T(G)$ with $T(S)$ 
via the bijection \eqref{eq:canonical_bijection}.

\subsection{Bers fiber space} For any $\mu \in L_{\infty}(U, G)_1$, 
there is a unique quasiconformal map $w^{\mu}$ of $\hat{\mathbb{C}}$ 
with $w^{\mu}(0)=0, w^{\mu}(1)=1, w^{\mu}(\infty)=\infty$  
such that $w^{\mu}$ satisfies the Beltrami equation $w_{\bar{z}}=\mu w_{z}$ on $U$, 
and is conformal on the lower half-plane $L$.  
The {\it Bers fiber space} $F(G)$ over $T(G)$ is defined by
$$
F(G)=\{([w_{\mu}], z) \in T(G) \times \hat{\mathbb{C}} ~|~[w_{\mu}] \in T(G),~ z \in w^{\mu}(U) \}.
$$ 

Take a point $z_0 \in U$ 
and denote by $A$ the set of all points $g(z_0)$, 
$g \in G$.   
Let 
$$
v : U \rightarrow U-A
$$
be a holomorphic universal covering map. 
We define 
$$
\dot{G}=\{ h \in {\rm Aut} ~U~|~v\circ h=g\circ v ~\text{for some}~g \in G~\}.
$$
We see that $U/\dot{G}=U/G-\{\pi (z_0)\}$, 
where $\pi : U \rightarrow S=U/G$ is the natural projection.  
Set $\dot{S}=U/\dot{G}$. 
By Lemma 6.3 of Bers \cite{bers}, 
every point in $F(G)$ is represented 
as a point $([w_{\mu}], w^{\mu}(z_0))$ 
for some $\mu \in L_{\infty}(U, G)_1$. 
For $\mu \in L_{\infty}(U, G)_1$, 
we define $\nu \in L_{\infty}(U, \dot{G})_1$ by
$$
\mu(v(z)) \frac{\overline{v'(z)}}{v'(z)} = \nu(z).
$$
Then, Bers' isomorphism theorem asserts that the map
$$
\varphi : ([w_{\mu}], w^{\mu}(z_0)) \mapsto [w_{\nu}]
$$
is a biholomorphic bijection map (cf. Theorem 9 of \cite{bers}).
Moreover 
we define a map $r : T(G) \times U \rightarrow F(G)$ 
by
$$
([w_{\mu}], z) \mapsto ( [w_{\mu}], h_{ [ w_{\mu} ] }(z) ).
$$
where $U$ is the universal covering of $S$ 
and $h_{ [ w_{\mu} ] } :U \rightarrow w^{\mu}(U)$ is the Teichm\"uller mapping in 
the class of $w^{\mu}$.
We remark that our definition of $r$ is different from Bers' one. 
See the proof of Lemma 6.4 of \cite{bers}.
This map $r$ is not real analytic, 
but it is a homeomorphism.  
This difference does not influence our purpose.  

Via the bijection \eqref{eq:canonical_bijection}, 
the Bers fiber space $F(S)$ over $T(S)$ is defined by
$$
F(S)=\{([R_{\mu},f_{\mu}], z) \in T(S) \times \hat{\mathbb{C}} ~|~[R_{\mu},f_{\mu}] \in T(S),~ z \in w^{\mu}(U)  \}
$$
with the projection
$$
F(S) \ni ([R_{\mu},f_{\mu}], z)  \mapsto [R_{\mu},f_{\mu}] \in T(S).
$$

Similarly, 
we have the isomorphism $F(S) \rightarrow T(\dot{S})$ and 
the homeomorphism $T(S) \times U \rightarrow F(S)$, 
and we denote them by the same symbols $\varphi$ and $r$, respectively. 

\subsection{The Bers embedding} The Teichm\"uller space $T(S)$ can be regarded canonically 
as a bounded domain of a complex Banach space $B_2(L, G)$ 
in the following way:    
Let $B_2(L, G)$ consist of all holomorphic functions $\phi$ defined on $L$ 
such that
$$
\phi(g(z))g'(z)^2=\phi(z) ~\text{for}~g\in G ~\text{and}~  z\in L
$$
and
$$
\|\phi \|_{\infty} = \sup_{z \in L} |({\rm Im}z)^2\phi (z)|<\infty.
$$
For any $\mu \in L_{\infty}(U, G)_1$, 
we denote by $\phi^{\mu}$ the Schwarzian derivative of $w^{\mu}$ on $L$, that is, 
$$
\phi^{\mu}(z) = \{w^{\mu},z \}=\frac{(w^{\mu})'''(z)}{(w^{\mu})'(z)}-\frac{3}{2} \left( \frac{(w^{\mu})''(z)}{(w^{\mu})'(z)} \right)^2  ~~\text{for}~~ z\in L.
$$
If $\mu \in L_{\infty}(U, G)_1$, 
then $\phi^{\mu} \in B_2(L, G)$ and 
the {\it Bers embedding} $T(S) \ni [R_\mu,f_\mu] \mapsto \phi^{\mu} \in B_{2}(L, G)$ is a biholomorphic bijection 
of $T(S)$ 
onto a holomorphically  bounded domain in $B_2(L, G)$. 
From now on, 
we will identify $T(S)$ 
with its image in $B_2(L, G)$.

Similarly, we define the Bers embedding of $T(\dot{S})$ 
into $B_{2}(L, \dot{G})$.  
Since $F(S)$ is a domain of $B_2(L, G) \times \hat{\mathbb{C}}$ 
and $T(\dot{S})$ is a bounded domain in $B_2(L, \dot{G})$, 
we define the topological boundaries of them naturally. 
Let $\overline{F(G)}$ denote the closure of $F(G)$ 
in $B_2(L, G) \times \hat{\mathbb{C}}$.  

\subsection{Main theorem} Zhang \cite{zhang} proved 
the Bers isomorphism $\varphi$ cannot be continuously extended 
to $\overline{F(S)}$ 
if the dimension of $T(S)$ is greater than zero. 
Then we have the following question:   
Is there some subset of $\overline{F(S)}-F(S)$ 
to which $\varphi$ can be continuously extended ?

To do this, 
we compose the isomorphism $\varphi : F(S) \rightarrow T(\dot{S})$ 
and the map $r: T(S)\times U \rightarrow F(S)$, 
then we obtain new map $\varphi \circ r:T(S) \times U \rightarrow T(\dot{S})$. 
Let ${\mathbb A}$ be a subset of $\partial U$ consisting of all points filling $S$ (cf. \S 3.3). 
Our main theorem is as follows.

\vspace{3mm}

{\bf Theorem}~\ref{main}
{\it The map $\varphi \circ r : T(S) \times U \rightarrow T(\dot{S})$ has a continuous extension  
to $T(S) \times {\mathbb A}$. 
}

\vspace{3mm}

The idea of proof of Theorem \ref{main} is as follows.
For any sequence $\{ ( p_m, z_m )\}_{m=1}^{\infty}$  
in $T(S) \times U$ converging to $(p_{\infty}, z_{\infty}) \in T(S) \times {\mathbb A}$,  
we put $q_m = \varphi \circ r(p_m, z_m) \in T(\dot{S})$. 
We need to prove that the sequence $\{ q_m \}_{m=1}^{\infty}$ converges  
without depending on the choice of a convergent sequence to $(p_{\infty}, z_{\infty}) \in T(S)  \times {\mathbb A}$.

Let $q_0$ be the basepoint of $T(\dot{S})$. 
It is known that 
the image of the Bers embedding is canonically identified 
with the slice $T(\dot{S}) \times \{\overline{q}_0\}$ in the quasifuchsian space
which is biholomorphic to $T(\dot{S}) \times T(\overline{\dot{S}})$ 
(cf. Chapter 8 of Bers \cite{bers1}). 
For each pair $(q_m,\overline{q}_0) \in T(\dot{S}) \times T(\overline{\dot{S}})$, 
there is a unique quasifuchsian group $\Gamma_m$ up to conjugation 
such that the conformal boundaries of 
a hyperbolic manifold $N_m={\mathbb H}^3/\Gamma_m$ correspond 
to the pair $(q_m, \overline{q}_0)$.

We assume throughout the paper that quasifuchsian groups $\Gamma_m$ and 
manifolds $N_m$ are marked by a homomorphism and homotopy equivalence,
respectively.

For our purpose, 
it is sufficient to show that a limit $\Gamma_{\infty}$ of the sequence $\{ \Gamma_m \}_{m=1}^{\infty}$ is uniquely determined. 
To do this, 
we show the following key lemma.  

\vspace{3mm}

{\bf Lemma}~ \ref{lem-1}
{\it Given $z_{\infty} \in {\mathbb A}$,  
there exists a filling lamination $\lambda$ with the following property. 
For any sequence $\{ z_m \}_{m=1}^{\infty}$ with 
$\lim_{m \to \infty} z_m = z_{\infty}$ and $q_m =\varphi \circ r(p_m, z_m)$ 
as above, there exists a sequence of simple closed curves 
$\{ \alpha_m \}_{m=1}^{\infty}$ with the following properties:
\begin{itemize}
\item[(1)] The lengths $\ell_{ N_m }( \alpha_m )$ of $\alpha_m$ in $N_m$ are bounded, and
\item[(2)] the sequence $\{ \alpha_m \}_{m=1}^{\infty}$ converges to $\lambda$ in $\overline{\mathcal{C}}(\dot{S})$. 
\end{itemize}
}

\vspace{3mm}

Here the definition of $\overline{\mathcal{C}}(\dot{S})$ will be given in \S 2 and \S 3. 
We remark that $\lambda$ is identified with an ending lamination by Klarreich's work in \cite{klarreich}. 

From this lemma, we see that the limit $\Gamma_{\infty}$ of 
$\{ \Gamma_m \}_{m=1}^{\infty}$ is singly degenerate Kleinian group, 
that is, the the region of discontinuity of $\Gamma_{\infty}$ is simply connected. 
Then by using Ending lamination theorem for surface groups of \cite{bcm}, 
$\Gamma_{\infty}$ is uniquely determined by $(\lambda, \overline{q}_{0})$ up to conjugation, 
and it is the only possible limit. 

\section{Gromov-hyperbolic spaces}

In this section, we shall give the boundary 
at infinity of hyperbolic space.
For details, see Klarreich \cite{klarreich}.

Let $(\Delta,d)$ be a metric space. 
If $\Delta$ is equipped 
with a basepoint $0$, 
we define the {\it Gromov product} $\langle x | y \rangle$ of  points $x$ and $y$ in $\Delta$ by
$$
\langle x | y \rangle = \langle x | y \rangle_0 =\frac12\{ d(x, 0)+d(y, 0)-d(x, y) \}.
$$
For $\delta \geqq 0$, the metric space $\Delta$ is said to be {\it $\delta$-hyperbolic} if
$$
\langle x | y \rangle \geqq \min\{ \langle x | z \rangle, \langle y | z \rangle \}-\delta
$$
holds for every $x, y, z \in \Delta$.
We say that $\Delta$ is {\it hyperbolic in the sense of Gromov} 
if $\Delta$ is $\delta$-hyperbolic for some $\delta \geqq 0$. 

If $\Delta$ is a hyperbolic space, 
we can define a boundary of $\Delta$ 
in the following way:   
We say that a sequence $\{x_n\}_{n=1}^{\infty}$ of points 
in $\Delta$ {\it converges at infinity} 
if it satisfies 
$\lim_{m, n \to \infty}\langle x_m | x_n \rangle= \infty$. 
Given two sequences $\{ x_n \}_{n=1}^{\infty}$ 
and $\{ y_n \}_{n=1}^{\infty}$ that converge at infinity, 
they are called to be {\it equivalent} 
if $\lim_{m, n \to \infty}\langle x_m | y_n \rangle= \infty$. 
Since $\Delta$ is a hyperbolic, 
we see that this is an equivalence relation ($\sim$). 
We set
$$
\partial_{\infty} \Delta = \{ \{x_n\}_{n=1}^{\infty} ~|~\{x_n\}_{n=1}^{\infty} \text{converges at infinity}\}/\sim
$$
and call $\partial_{\infty} \Delta$ the {\it boundary at infinity} of $\Delta$. 
If $\xi \in \partial_{\infty} \Delta$, 
then we say that a sequence of points in $\Delta$ {\it converges to $\xi$} 
if the sequence belongs to the equivalence class $\xi$. 
We put 
$$
\overline{\Delta}=\Delta \cup \partial_{\infty} \Delta.
$$  
\section{Leininger, Mj and Schleimer's work}

\subsection{The Curve Complex} 
Let $S=U/G$ be a closed Riemann surface of genus $g(\geqq 2)$ and 
$\pi : U\rightarrow S$ be the natural projection.  
We take a point $z_0$ in $U$ and 
set $\hat{z}_0 = \pi(z_0)$. Put $\dot{S} = S \setminus \{\hat{z}_0 \}$. 

The curve complex ${\mathcal C}(S)$ is a simplicial complex 
which is defined as follows.  
The vertices of ${\mathcal C}(S)$ are homotopy classes of 
nontrivial simple closed curves on $S$. 
Two curves are connected by an edge 
if they can be realized disjointly on $S$, and 
in general a collection of curves spans a simplex 
if the curves can be realized disjointly on $S$. 
We define ${\mathcal C}(\dot{S})$ similarly, with vertices   
consisting of nontrivial, non-peripheral simple closed curves on $\dot{S}$.    

We give ${\mathcal C}(S)$(resp ${\mathcal C}(\dot{S}))$ 
a metric structure by making every simplex a regular Euclidean simplex 
whose edges have length 1, and define the distance $d_{{\mathcal C}(S)}$(resp $d_{{\mathcal C}(\dot{S})}$) by taking shortest paths.

\begin{thm} [Masur and Minsky \cite{mm1}, Theorem 1.1]
The spaces ${\mathcal C}(S)$ and ${\mathcal C}(\dot{S})$ are $\delta$-hyperbolic for some $\delta>0$. 
\end{thm}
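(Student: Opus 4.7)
The plan is to follow Masur and Minsky's original strategy of using Teichm\"uller geometry to certify hyperbolicity of $\mathcal{C}(S)$. Introduce a coarse systole projection $\Upsilon : T(S) \to \mathcal{C}(S)$ sending a marked hyperbolic structure $X$ to any shortest simple closed geodesic on $X$. This map is coarsely well-defined and coarsely Lipschitz: the collar lemma forces two simultaneously short curves on the same surface to have bounded intersection number, and curves with bounded intersection number are at bounded $d_{\mathcal{C}(S)}$-distance. Thus every Teichm\"uller geodesic gives rise to a coarsely Lipschitz path in $\mathcal{C}(S)$.

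The central step is to show that for each Teichm\"uller geodesic $g : \mathbb{R} \to T(S)$, the composition $\Upsilon \circ g$ is an unparametrized quasi-geodesic in $\mathcal{C}(S)$ with constants depending only on the topology of $S$. Here one uses the active-interval principle: each simple closed curve $\alpha$ is short along $g$ only during a time interval whose length is controlled by the minimum hyperbolic length of $\alpha$ along $g$, together with Minsky's product region theorem to control what happens when several curves become short simultaneously. Summing contributions from the short curves encountered along $g$ then yields uniform progress in $\mathcal{C}(S)$.

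To conclude $\delta$-hyperbolicity, I would verify a thin triangles criterion as follows. Given $\alpha, \beta, \gamma \in \mathcal{C}(S)$, choose three Teichm\"uller geodesics joining pairs of structures in which the respective pairs of curves are simultaneously short. Their $\Upsilon$-images furnish three quasi-geodesic sides at bounded Hausdorff distance from the corresponding $\mathcal{C}(S)$-geodesics. The ``two out of three'' principle---any curve short somewhere on one side of a Teichm\"uller triangle is also short somewhere on one of the other two sides---then guarantees the quasi-geodesic sides stay uniformly close, which by a standard criterion is equivalent to Gromov hyperbolicity. The identical argument applied to the punctured surface $\dot{S}$, with Teichm\"uller theory of the once-punctured surface in place of the closed one, yields hyperbolicity of $\mathcal{C}(\dot{S})$.

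The main obstacle is the quasi-geodesic step: proving that $\Upsilon \circ g$ makes uniform progress requires delicate control over how hyperbolic length functions evolve along the Teichm\"uller flow and the coordination of collar regions via Minsky's product theorem; this is the technical heart of the argument and everything else is a soft consequence.
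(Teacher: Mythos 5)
This theorem is not proved in the paper at all: it is quoted as an external result (Masur--Minsky, Theorem 1.1 of \cite{mm1}), so the only meaningful comparison is with the original Masur--Minsky argument, whose general outline (systole map $\Upsilon:T(S)\to\mathcal{C}(S)$, Teichm\"uller geodesics pushed forward to coarse paths in $\mathcal{C}(S)$) you do follow. However, as written your concluding step is circular. You say that the $\Upsilon$-images of the three Teichm\"uller geodesics are ``quasi-geodesic sides at bounded Hausdorff distance from the corresponding $\mathcal{C}(S)$-geodesics'' and then check thinness. Stability of quasi-geodesics (the Morse lemma) is a consequence of Gromov hyperbolicity; in a space not yet known to be hyperbolic, an unparametrized quasi-geodesic need not fellow-travel any geodesic, so you cannot use this comparison to establish hyperbolicity. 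Masur and Minsky avoid exactly this trap: they prove a coarse \emph{contraction} property for a projection of $\mathcal{C}(S)$ onto the image of each Teichm\"uller geodesic and then invoke their criterion (Theorem 2.3 of \cite{mm1}) that a coarsely transitive family of paths with uniformly contracting projections forces hyperbolicity, with the quasi-geodesic property of the paths obtained as a byproduct rather than as an input. If you want to keep your triangle-based formulation, you must replace the appeal to geodesic stability by a ``guessing geodesics'' type criterion, which tests thinness of triangles built from the designated family of paths only, never mentioning actual geodesics.

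A second gap is the ``two out of three'' principle itself: the assertion that any curve which becomes short somewhere on one side of a Teichm\"uller triangle must become short on one of the other two sides is stated without proof, and it is not an off-the-shelf fact; controlling which curves get short along a Teichm\"uller geodesic in combinatorial terms is genuinely delicate (this is the content of later work of Rafi, and in \cite{mm1} the corresponding work is done with quadratic-differential and extremal-length estimates at the ``balanced time'' of a curve, not with the hyperbolic-length/product-region bookkeeping you describe). So the proposal correctly identifies the skeleton of the original proof, but the two load-bearing steps --- uniform progress of $\Upsilon\circ g$ and the passage from properties of these paths to $\delta$-hyperbolicity --- are respectively asserted and argued circularly. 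Finally, note that for the purposes of this paper none of this is needed: the result is simply cited.
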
 
We put $\overline{{\mathcal C}}(S) = {\mathcal C}(S) \cup \partial_{\infty} {\mathcal C}(S)$ 
and $\overline{{\mathcal C}}(\dot{S}) = {\mathcal C}(\dot{S}) \cup \partial_{\infty} {\mathcal C}(\dot{S})$, respectively. 

\subsection{Definition of $\Phi$} 
\label{subsec:definition_Phi}

Denote by ${\rm Diff}^{+}(S)$ 
the group of all orientation preserving diffeomorphisms of $S$ onto itself. 
Let ${\rm Diff}_0(S)$ be a group which consists of all elements in 
${\rm Diff}^{+}(S)$ isotopic to the identity map $id$.

We define the evaluation map
$$
{\rm ev} : {\rm Diff}^{+}(S) \rightarrow S
$$
by ${\rm ev}(f)=f(\hat{z}_0)$. 
A theorem of Earle and Eells asserts that 
${\rm Diff}_0(S)$ is contractible. 
Hence, for the map ${\rm ev}|{\rm Diff}_0(S)$, 
there is a unique lift 
$$\tilde{\rm ev} : {\rm Diff}_0(S) \rightarrow U$$
satisfying the condition that $\tilde{\rm ev}(id) = z_0$. 
 
Following Leininger, Mj and Schleimer \cite{lms}, 
we will define a map 
$\tilde{\Phi} : {\mathcal C}(S) \times {\rm Diff}_0(S) \rightarrow {\mathcal C}(\dot{S})$. 
To give an idea of the definition of $\tilde{\Phi}$, 
we consider the case of  ${\mathcal C}^{0}(S) \times {\rm Diff}_0(S)$ 
where ${\mathcal C}^{0}(S)$ is 0-skeleton of ${\mathcal C}(S)$.  
Take a point $(v,f) \in {\mathcal C}^{0}(S) \times {\rm Diff}_0(S)$. 
From now on, if no confusion is possible, 
we identify the homotopy class $v$ with the geodesic representative. 
Then there is an isotopy $f_t,~t\in [0, 1]$, 
between $f_0=id$ and $f_1=f$. 
Setting $C(t)=f_t(\hat{z}_0)$ for every $t \in [0,1]$, 
we have a path $C$ from $\hat{z}_0$ to $f(\hat{z}_0)$ on $S$. 
Move a point in $S$ from $f(\hat{z}_0)$ to $\hat{z}_0$ along $C$ 
and drag $v$ back along the moving point. 
Then we obtain new simple closed curve on $\dot{S}$ and 
denote the curve by $f^{-1}(v)$. 
Thus we define $\tilde{\Phi}(v,f)=f^{-1}(v)$.

However, when $f(\hat{z}_0)\in v$, 
we can not define $\tilde{\Phi}(v,f)$ as above. 
We solve this problem in the following way:   
Now choose $\{\epsilon(v) \}_{v \in {\mathcal C}^{0}(S) } \subset {\mathbb R}_{>0}$ 
so that the $\epsilon(v)$-neighborhood $N(v)=N_{\epsilon(v)}$ of $v$ 
has the following properties:
\begin{itemize} 
\item[(i)] $N(v)$ is homeomorphic to $S^1 \times [0,1]$ 
\item[(ii)] $N(v_1) \cap N(v_2)=\emptyset$ if $v_1 \cap v_2 = \emptyset$.  
\end{itemize} 
Let $N^{\circ}(v)$ be the interior of $N(v)$ 
and  $v^{\pm}$ the boundary components of $N(v)$.
Notice that $\epsilon(v)$ is depending only on 
the length of the geodesic representative
of $v$ (cf. \cite{Buser}).

If $v\subset {\mathcal C}(S)$ is a simplex 
with vertices $\{v_0, v_1, \cdots, v_k \}$, 
then we consider the barycentric coordinates 
for points in $v$:
$$\{ \sum_{j=0}^{k} s_j v_j~|~\sum_{j=0}^{k} s_j =1 ~\text{and}~  s_j\geq 0,~\text{for}~j=0,1,\cdots,k \}$$

For a point $(v,f)$ with $v$ a vertex of ${\mathcal C}(S)$, 
we can define $\tilde{\Phi}$ as follows.   
If $f(\hat{z}_0) \not\in N^{\circ}(v)$, 
then we define $$\tilde{\Phi}(v,f) = f^{-1}(v)$$ as above.

If $f(\hat{z}_0) \in N^{\circ}(v)$, then
$f^{-1}(v^{+})$ and $f^{-1}(v^{-})$ are not isotopic in $\dot{S}$. 
We set $$t = \frac{d( v^{+}, f(\hat{z}_0) ) }{2\epsilon(v)},$$
where $d(v^{+},f(\hat{z}_0))$ is the distance inside $N(v)$ 
from $f(\hat{z}_0)$ to $v^{+}$. 
Then we define
$$\tilde{\Phi}(v, f) = tf^{-1}(v^{+})+(1-t)f^{-1}(v^{-})$$
in barycentric coordinates on the edge $[f^{-1}(v^{+}),f^{-1}(v^{-})]$. 

In general, for a point $(x,f)\in {\mathcal C}(S) \times {\rm Diff}_0(S)$ 
with $x = \sum_{j=0}^{k} s_j v_j$, 
we define $\tilde{\Phi}(x, f)$ as follows. 
If $f(\hat{z}_0) \not\in \bigcup_{j=0}^{k} N^{\circ}(v_j)$, 
then we define $$\tilde{\Phi}(x,f) = \sum_{j}s_jf^{-1}(v_j).$$

If $f(\hat{z}_0) \in N^{\circ}(v_i)$ for exactly one $i$,  
we set $$t = \frac{d( v^{+}, f(\hat{z}_0) ) }{2\epsilon(v_i)},$$
and define
\begin{equation}
\label{tilde-Phi}
\tilde{\Phi}(x, f) = s_i (tf^{-1}(v_i^{+})+(1-t)f^{-1}(v_i^{-}))+\sum_{j \ne i} s_j f^{-1}(v_j).
\end{equation}

Finally, by Proposition 2.2 in \cite{lms}, 
if $\tilde{\rm ev}(f_1) = \tilde{\rm ev}(f_2) $ in $U$, 
then we see that $\tilde{\Phi}(x, f_1) = \tilde{\Phi}(x, f_2)$. 
From this, we have a map $\Phi : {\mathcal  C}(S) \times U \rightarrow {\mathcal C}(\dot{S})$ satisfying
$\tilde{\Phi} = \Phi \circ(id \times \tilde{\rm ev})$. 

\subsection{Extendibility of $\Phi$}
A subsurface of $S$ is said to be an {\it essential} if it is either a component of the complement of a
geodesic multicurve in $S$, the annular neighborhood $N(v)$ of some geodesic 
$v \in {\mathcal C}^{0}(S)$, or else $S$.

Given an essential subsurface $Y$, if a point $x \in \partial U$ has the following properties,   
\begin{itemize}
\item[(i)] for every geodesic ray $r \subset U$ ending at $x$ 
and for every $v \in {\mathcal C}^{0}(S)$ 
which nontrivially intersects an essential subsurface $Y,$ 
we have $\pi (r) \cap v \ne \emptyset$ 
and 
\item[(ii)] there is a geodesic ray $r \subset U$ ending at $x$ 
such that $\pi (r) \subset Y$, 
\end{itemize}
we call such a point $x$ a {\it filling point} for $Y$ (or simply, $x$ {\it fills} $Y$).
We set 
$$
{\mathbb A} =\{ x \in \partial U ~|~x ~\text{fills}~ S\}.
$$

We have the following result.

\begin{thm}[\cite{lms}, Theorem 1.1 and 3.6]\label{extension}
For any $v \in {\mathcal C}(S)$, the map
$$
\Phi(v, \cdot) : U \rightarrow {\mathcal C}(\dot{S})
$$
can be continuously extended to 
$$
\overline{\Phi}(v, \cdot) : U \cup {\mathbb A} \rightarrow \overline{\mathcal C}(\dot{S}).
$$
Moreover for every $z_{\infty} \in {\mathbb A}$, 
$\overline{\Phi}(v, z_{\infty})$ does not depend on $v$.  
\end{thm}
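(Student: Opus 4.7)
The plan is to establish two statements: (a) for each fixed $v \in \mathcal{C}(S)$ and each $z_\infty \in \mathbb{A}$, any sequence $z_m \to z_\infty$ in $U$ gives rise to $\Phi(v, z_m)$ converging in $\overline{\mathcal{C}}(\dot{S})$ to a point $\overline{\Phi}(v, z_\infty) \in \partial_\infty \mathcal{C}(\dot{S})$ independent of the sequence; and (b) this limit point is independent of $v$. Since $\mathcal{C}(\dot{S})$ is $\delta$-hyperbolic by Masur--Minsky, for (a) it suffices (by the description of $\partial_\infty$ via the Gromov product) to prove two things: (i) $d_{\mathcal{C}(\dot{S})}(\Phi(v, z_0), \Phi(v, z_m)) \to \infty$, and (ii) whenever $z_m, z_m' \to z_\infty$, the difference $d_{\mathcal{C}(\dot{S})}(\Phi(v, z_m), \Phi(v, z_m'))$ stays uniformly bounded. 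These together force $\langle \Phi(v,z_m) \mid \Phi(v,z_m')\rangle \to \infty$, which is exactly convergence to a common point of $\partial_\infty \mathcal{C}(\dot{S})$.

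For step (i), I would exploit the filling hypothesis. Since $z_\infty \in \mathbb{A}$, the $U$-geodesic $[z_0, z_\infty)$ projects under $\pi$ to a curve on $S$ that intersects every vertex of $\mathcal{C}^0(S)$; in particular, $\pi([z_0, z_m])$ crosses any fixed reference curve $\alpha$ arbitrarily many times as $m \to \infty$. Unwinding the definition of $\tilde{\Phi}(v, f) = f^{-1}(v)$, the curve $\Phi(v, z_m)$ is obtained by dragging $v$ back along $\pi([z_0, z_m])$, so its geometric intersection number with a curve dual to $v$ in $\dot{S}$ grows without bound. Using the standard lower bound $d_{\mathcal{C}(\dot{S})}(\beta, \gamma) \geq 2 + \log_2 i(\beta, \gamma)$ for distance in the curve graph -- or equivalently a Masur--Minsky subsurface projection bound into the annular complex around the puncture -- this forces $d_{\mathcal{C}(\dot{S})}(\Phi(v, z_0), \Phi(v, z_m)) \to \infty$.

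For step (ii), I would show that $\Phi(v, \cdot) : U \to \mathcal{C}(\dot{S})$ is coarsely Lipschitz with constants depending only on $v$. Given two sequences with $z_m, z_m' \to z_\infty$, one may join $z_m$ to $z_m'$ by a path in $U$ whose projection to $S$ encloses only bounded area away from $v$, and the interpolation rule \eqref{tilde-Phi} on the edges $[f^{-1}(v_i^+), f^{-1}(v_i^-)]$ then gives an $O(1)$ bound on $d_{\mathcal{C}(\dot{S})}(\Phi(v, z_m), \Phi(v, z_m'))$, independent of $m$. Combining (i) and (ii) in the usual hyperbolic-space way yields the existence and well-definedness of $\overline{\Phi}(v, z_\infty)$.

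Finally, for the independence of the limit from $v$, I would show that for any two vertices $v, v' \in \mathcal{C}^0(S)$ there is a constant $K(v,v')$ (depending on $d_{\mathcal{C}(S)}(v,v')$ but \emph{not} on $z$) with $d_{\mathcal{C}(\dot{S})}(\Phi(v, z), \Phi(v', z)) \leq K(v,v')$ for all $z \in U$. This is plausible because for each $z$ the two curves are dragged along the same path and differ only in their base curves; the coarse-Lipschitz argument of (ii), transposed to the $v$-variable, gives the uniform bound. Since $d_{\mathcal{C}(\dot{S})}(\Phi(v, z_0), \Phi(v, z_m)) \to \infty$ while the gap to $\Phi(v', z_m)$ stays bounded, the two sequences are Gromov-equivalent, so $\overline{\Phi}(v, z_\infty) = \overline{\Phi}(v', z_\infty)$. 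The main obstacle I expect is a rigorous proof of (i): one must extract from the qualitative filling condition a quantitative blow-up of either intersection number with a fixed curve on $\dot{S}$ or of a subsurface projection, which requires compactness and the absence of accidental parabolics in the relevant annular subsurface projections -- essentially the crux of \cite{lms}.
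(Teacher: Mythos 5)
First, a point of orientation: the paper does not prove this statement at all --- it is quoted verbatim from Leininger--Mj--Schleimer \cite{lms} (their Theorems 1.1 and 3.6, the universal Cannon--Thurston map), and the proof there is the main content of that entire paper. So any short argument along the lines you propose should be treated with suspicion, and indeed your two key steps have genuine gaps. In step (i) you invoke a ``standard lower bound'' $d_{\mathcal C(\dot S)}(\beta,\gamma)\geq 2+\log_2 i(\beta,\gamma)$; the actual inequality goes the \emph{other} way: intersection number bounds curve-complex distance from \emph{above} ($d_{\mathcal C}(\beta,\gamma)\leq 2+2\log_2 i(\beta,\gamma)$), never from below. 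Large intersection with a fixed curve does not force distance to grow --- high powers of a Dehn twist give curves of enormous intersection number at uniformly bounded distance --- so the fact that dragging $v$ along $\pi([z_0,z_m])$ creates many intersections proves nothing about $d_{\mathcal C(\dot S)}(\Phi(v,z_0),\Phi(v,z_m))\to\infty$. The honest route is to show that every accumulation point of $\Phi(v,z_m)$ in $\mathcal{PML}(\dot S)$ is a filling lamination and then apply Klarreich's theorem \cite{klarreich}; extracting that from the hypothesis that $z_\infty$ fills $S$ is precisely the hard part of \cite{lms} and is not supplied by your sketch (nor by an appeal to annular subsurface projections, which again only certify that geodesics pass near a region, not that distances diverge).

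Step (ii) has a structural problem as well: a coarse-Lipschitz estimate for $\Phi(v,\cdot)$ with respect to the hyperbolic metric on $U$ cannot give a uniform bound on $d_{\mathcal C(\dot S)}(\Phi(v,z_m),\Phi(v,z_m'))$ for two arbitrary sequences tending to the same boundary point, because $d_U(z_m,z_m')$ is in general unbounded (e.g.\ two sequences approaching $z_\infty$ at different rates, or along different horocycles/rays). The phrase ``a path whose projection encloses only bounded area away from $v$'' is not a construction, and making the boundary value well defined is exactly the Cannon--Thurston-type difficulty that \cite{lms} resolve with ladder/hyperbolicity machinery rather than a Lipschitz bound. (Also, strictly speaking, (i) together with same-index boundedness does not yet show that a single sequence $\Phi(v,z_m)$ converges at infinity; you need eventual mutual boundedness over all large index pairs before the Gromov products blow up.) The one part of your sketch that is essentially fine is the independence of $v$: since the point-pushing homeomorphisms preserve disjointness, $d_{\mathcal C(\dot S)}(\Phi(v,z),\Phi(v',z))$ is bounded uniformly in $z$ in terms of $d_{\mathcal C(S)}(v,v')$, so once convergence to a boundary point is known for one $v$ it follows, with the same limit, for all $v$. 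But that step is conditional on the convergence you have not established, so the proposal as it stands does not prove the theorem.
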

 
\section{Main Theorem}

Let $\gamma$ be a nontrivial simple closed curve 
on a Riemann surface $R$. 
Denote by ${\rm Mod}(A)$ the modulus of an annulus in $R$ 
whose core curve is homotopic in $R$ to $\gamma$. 
We define the extremal length ${\rm Ext}( \gamma )$ of $\gamma$ on $R$ by
$$
{\rm Ext}_{R}( \gamma ) = \inf_{A} 1/{\rm Mod}(A),
$$
where the infimum is over all annuli $A \subset R$ 
whose core curve is homotopic in $R$ to $\gamma$ 
(cf. Chapter 4 of Ahlfors \cite{ahlfors}).    

Given any point $p=[R, f] \in T(S)$ and a nontrivial simple closed curve $\alpha$ on $S$,
we define the extremal length ${\rm Ext}_{p}( \alpha )$ by
$$
{\rm Ext}_{p}( \alpha )={\rm Ext}_{R}(f(\alpha)).
$$

\begin{thm}\label{main}
The map $\varphi \circ r : T(S) \times U \rightarrow T(\dot{S})$ has a continuous extension to   
$T(S) \times {\mathbb A}$. 
\end{thm}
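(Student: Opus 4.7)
The plan is the one sketched in the introduction. Fix a sequence $(p_m,z_m) \in T(S) \times U$ converging to $(p_\infty, z_\infty) \in T(S) \times \mathbb{A}$ and set $q_m := \varphi \circ r(p_m,z_m) \in T(\dot{S})$. By Bers' simultaneous uniformization, each $q_m$ corresponds to a marked quasifuchsian group $\Gamma_m \subset \mathrm{PSL}_2(\mathbb{C})$ whose upper conformal boundary is $q_m$ and whose lower conformal boundary is the fixed basepoint $\overline{q}_0 \in T(\overline{\dot{S}})$. Convergence of $q_m$ in $T(\dot{S})$ is equivalent to algebraic convergence of $\{\Gamma_m\}$ in the representation variety $AH(\pi_1(\dot{S}))$, so the task reduces to showing that every algebraically convergent subsequence of $\{\Gamma_m\}$ has the same limit, and that this common limit depends only on $z_\infty$.

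To produce a subsequential algebraic limit, I would invoke Lemma \ref{lem-1}, which furnishes simple closed curves $\alpha_m$ on $\dot{S}$ with uniformly bounded hyperbolic lengths $\ell_{N_m}(\alpha_m)$ in $N_m = \mathbb{H}^3/\Gamma_m$, and with $[\alpha_m] \to \lambda$ in $\overline{\mathcal{C}}(\dot{S})$, where $\lambda$ is a filling lamination on $\dot{S}$ determined solely by $z_\infty$. The bounded translation lengths of these curves, together with the fact that the lower end invariant is constantly $\overline{q}_0$, prevent $\{\Gamma_m\}$ from leaving every compact set of $AH(\pi_1(\dot{S}))$. After passing to a subsequence we may therefore assume $\Gamma_m \to \Gamma_\infty$ algebraically, and the lower end of $N_\infty = \mathbb{H}^3/\Gamma_\infty$ remains geometrically finite with conformal structure $\overline{q}_0$.

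Next I would identify the upper end of $N_\infty$. Continuity of length under algebraic limits preserves the uniform bound on $\ell_{N_\infty}(\alpha_m)$, so the closed geodesics representing $\alpha_m$ in $N_\infty$ accumulate onto a geodesic lamination. Klarreich's theorem identifies $\partial_\infty \mathcal{C}(\dot{S})$ with the space of filling laminations on $\dot{S}$ (modulo forgetting the transverse measure), and the convergence $[\alpha_m] \to \lambda$ in $\overline{\mathcal{C}}(\dot{S})$ therefore forces $\lambda$ to be the ending lamination of the upper end of $N_\infty$. Hence $\Gamma_\infty$ is a singly degenerate Kleinian surface group with end invariants $(\lambda, \overline{q}_0)$. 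By the Ending Lamination Theorem of Brock, Canary, and Minsky, $\Gamma_\infty$ is determined up to conjugation by $(\lambda, \overline{q}_0)$, and since $\lambda$ depends only on $z_\infty$ while $\overline{q}_0$ is fixed, the algebraic limit depends only on $z_\infty$. This yields the desired continuous extension of $\varphi \circ r$ to $T(S) \times \mathbb{A}$.

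The principal obstacle is Lemma \ref{lem-1} itself: converting the boundary datum $z_\infty \in \mathbb{A} \subset \partial U$ into geometric information (bounded-length curves in the three-manifolds $N_m$ whose homotopy classes converge in $\overline{\mathcal{C}}(\dot{S})$ to a prescribed filling lamination) requires coupling the Leininger--Mj--Schleimer extension of Theorem \ref{extension} with extremal-length estimates that relate the Teichm\"uller geometry of $T(S) \times U$ to that of $T(\dot{S})$ under $\varphi \circ r$. Once Lemma \ref{lem-1} is granted, the Ending Lamination Theorem does the remaining uniqueness work essentially for free, and the rest of the argument is formal.
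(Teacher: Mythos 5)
Your overall strategy is the paper's: pass to the quasifuchsian groups $\Gamma_m$ uniformizing $(q_m,\overline{q}_0)$, extract an algebraic limit, show it is singly degenerate with upper ending lamination $\lambda$ depending only on $z_\infty$, and invoke the Ending Lamination Theorem to get uniqueness of the limit. But there is a genuine gap at the step where you identify the upper end of $N_\infty$. You write that ``continuity of length under algebraic limits preserves the uniform bound on $\ell_{N_\infty}(\alpha_m)$.'' Continuity of translation length under algebraic convergence is a statement about a \emph{fixed} curve: $\ell_{N_m}(\alpha)\to\ell_{N_\infty}(\alpha)$ for each fixed $\alpha$. Here the curves $\alpha_m$ vary with $m$, and a uniform bound on $\ell_{N_m}(\alpha_m)$ does not transfer to a bound on $\ell_{N_\infty}(\alpha_m)$ by any such pointwise continuity; this transfer is precisely the delicate point. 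The paper handles it differently: it normalizes, considering $\alpha_m/\ell_{q_0}(\alpha_m)$, extracts a limit measured lamination $\nu$ which by Klarreich's Theorem 1.4 has the same support as $\lambda$, and then applies Brock's continuity theorem for the length function $\underline{\ell}$ \emph{jointly} in the manifold and the measured lamination, so that $\underline{\ell}_{N_m}(\alpha_m/\ell_{q_0}(\alpha_m))\to\underline{\ell}_{N_\infty}(\nu)$. Combined with the observation that $\ell_{q_0}(\alpha_m)\to\infty$ (because $\alpha_m\to\infty$ in $\mathcal{C}(\dot S)$ by Lemma \ref{lem-1}(2)) and the bound $\ell_{N_m}(\alpha_m)\leqq C$ from Lemma \ref{lem-1}(1), this forces $\underline{\ell}_{N_\infty}(\nu)=0$, hence $\lambda$ is unrealizable in $N_\infty$; since $\lambda$ fills, $\Gamma_\infty$ is singly degenerate with ending lamination $\lambda$. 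Your sketch skips exactly this mechanism, and without it the claim that ``the convergence $[\alpha_m]\to\lambda$ forces $\lambda$ to be the ending lamination of the upper end'' is unsupported.

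Two smaller points. First, you defer Lemma \ref{lem-1} entirely; in the paper the proof of that lemma (the extremal length estimates via the annulus $N(\alpha)$, Maskit's comparison of hyperbolic and extremal length, and Bers' inequality relating $\ell_{q_m}$ to $\ell_{N_m}$) is the bulk of the work for the theorem, so as a proof of the theorem your write-up is incomplete even granting the reduction. Second, your compactness claim (that bounded length of the $\alpha_m$ plus the fixed lower end prevents escape from compacta of $AH(\pi_1(\dot S))$) is more than is needed and not argued; it suffices that the sequence lies in a fixed Bers slice, whose closure is compact, which is how the existence of an algebraic limit of a subsequence is usually justified.
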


{\it Proof.}~~Let $\{ ( p_m, z_m )\}_{m=1}^{\infty}$ be any sequence 
in $T(S) \times U$ converging to $(p_{\infty}, z_{\infty}) \in T(S) \times {\mathbb A}$. 
Put $q_m = \varphi \circ r(p_m, z_m)$. 
We regard $\{ q_m \}_{m=1}^{\infty}$ as the sequence $\{ (q_m, \overline{q}_0) \}_{m=1}^{\infty}$ 
in a Bers slice of $T(\dot{S}) \times T( \overline{\dot{S} }) $ 
where $q_0$ is the base point $(\dot{S},i d)$ of $T(\dot{S})$. 

For each pair $(q_m,\overline{q}_0) \in T(\dot{S}) \times \{ \overline{q}_0 \}$, 
there is a unique quasifuchsian group $\Gamma_m$ 
up to conjugation 
such that it uniformizes $(q_m,\overline{q}_0)$.  
For each $\Gamma_m$, 
the quotient space $N_m={\mathbb H}^3/\Gamma_m$ is a hyperbolic manifold,  
where ${\mathbb H}^3$ is upper half space.

To prove that $\{ q_m \}_{m=1}^{\infty}$ converges, we need the following lemma.
\begin{lem}\label{lem-1}
Given $z_{\infty} \in {\mathbb A}$,  
there exists a filling lamination $\lambda$ with the following property. 
For any sequence $\{ z_m \}_{m=1}^{\infty}$ with 
$\lim_{m \to \infty} z_m = z_{\infty}$ and $q_m =\varphi \circ r(p_m, z_m)$ 
as above, there exists a sequence of simple closed curves 
$\{ \alpha_m \}_{m=1}^{\infty}$ with the following properties:
\begin{itemize}
\item[(1)] The lengths $\ell_{ N_m }( \alpha_m )$ of $\alpha_m$ in $N_m$ are bounded, and
\item[(2)] the sequence $\{ \alpha_m \}_{m=1}^{\infty}$ converges to $\lambda$ in $\overline{\mathcal{C}}(\dot{S})$. 
\end{itemize}
\end{lem}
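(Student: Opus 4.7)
My plan is to take $\lambda$ to be the value at $z_\infty$ of the continuous extension $\overline{\Phi}(v_0,\cdot)$ provided by Theorem~\ref{extension}, for any fixed simple closed curve $v_0$ on $S$. By Theorem~\ref{extension} this value lies in $\partial_\infty \mathcal{C}(\dot{S})$ and is independent of $v_0$; by Klarreich's identification of $\partial_\infty \mathcal{C}(\dot{S})$ with the space of ending laminations on $\dot{S}$, $\lambda$ is automatically a filling lamination. For $\alpha_m$ I take a vertex of $\mathcal{C}(\dot{S})$ close to $\Phi(v_0,z_m)$: choose $f_m \in \mathrm{Diff}_0(S)$ with $\tilde{\mathrm{ev}}(f_m)=z_m$, and set $\alpha_m = f_m^{-1}(v_0)$ when $f_m(\hat{z}_0) \notin N^\circ(v_0)$, and otherwise $\alpha_m = f_m^{-1}(v_0^\pm)$, choosing the sign so that $v_0^\pm$ lies on the opposite side of $v_0$ from $f_m(\hat{z}_0)$ inside $N(v_0)$.

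Condition (2) is then immediate. By construction $\alpha_m$ lies within $\mathcal{C}(\dot{S})$-distance at most $1$ of $\Phi(v_0,z_m)$; and by Theorem~\ref{extension}, $\Phi(v_0,z_m) \to \lambda$ in $\overline{\mathcal{C}}(\dot{S})$ as $z_m \to z_\infty$. A uniformly bounded perturbation does not affect convergence at infinity in the Gromov-hyperbolic space $\mathcal{C}(\dot{S})$, so $\alpha_m \to \lambda$.

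The bulk of the proof is (1). Via the Bers isomorphism, $q_m = \varphi \circ r(p_m,z_m)$ is represented by the marked punctured Riemann surface $\dot{S}_m := S_m \setminus \{x_m\}$, where $S_m$ represents $p_m$ and $x_m \in S_m$ is the projection of $h_{p_m}(z_m)$; the marking differs from the closed-surface marking of $S_m$ by the point-pushing homeomorphism in the Birman exact sequence realized by the isotopy $\{f_{m,t}\}_{t \in [0,1]}$. The construction of $\tilde{\Phi}$ is engineered precisely so that $\alpha_m$, transported through this marking, is represented on $\dot{S}_m$ by $v_0$ (or the chosen $v_0^\pm$), viewed as a simple closed curve on the punctured surface. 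Since $p_m \to p_\infty$ in $T(S)$, continuity of hyperbolic length on Teichm\"uller space gives a uniform bound on $\ell_{S_m}(v_0)$ and $\ell_{S_m}(v_0^\pm)$; the choice of sign guarantees that $x_m$ stays at distance at least $\epsilon(v_0)$ from the selected curve on $S_m$, so its hyperbolic length on $\dot{S}_m$ is likewise uniformly bounded. Finally, geodesic representatives in the quasifuchsian manifold $N_m$ minimize length in their homotopy classes, so $\ell_{N_m}(\alpha_m) \le \ell_{\dot{S}_m}(\alpha_m)$, giving (1).

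The main obstacle will be making the italicized identification rigorous: matching the marking of $\dot{S}_m$ coming from $\varphi \circ r$ (a Birman point-push determined by $r$ and the Teichm\"uller map $h_{p_m}$) against the curve-tracking construction of $\tilde{\Phi}$ via an explicit isotopy, and in particular handling the borderline case in which $f_m(\hat{z}_0)$ approaches or crosses $v_0$. Once this identification is in place the length estimates above are standard.
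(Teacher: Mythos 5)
Your choice of $\lambda$, your choice of $\alpha_m$ (including picking the boundary curve of $N(v_0)$ on the side of $v_0$ away from the point $f_m(\hat z_0)$), and your argument for (2) coincide with the paper's. The divergence, and the genuine gap, is in (1). You argue: geodesic length of $v_0$ (resp.\ $v_0^\pm$) on the closed surface $S_m$ is bounded by continuity of length functions, the puncture $x_m$ stays at distance at least $\epsilon(v_0)$ from the selected curve, hence the length on the punctured surface $\dot S_m$ is bounded. But the two halves of this estimate refer to different curves: the uniform length bound is for the \emph{geodesic representative} of the class of $\hat h_m(v_0^\mp)$ on $S_m$, whose position relative to $x_m$ (indeed, which side of it the straightening homotopy sweeps across, hence even its class in $\pi_1(\dot S_m)$) is uncontrolled; whereas the curve whose distance from $x_m$ you can control, namely the image $\hat h_m(v_0^\mp)$ of the fixed curve under the Teichm\"uller map, has no a priori length bound, since a $K$-quasiconformal map does not control lengths of curves. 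Moreover the distance bound you invoke lives on $S$ with its fixed metric; transporting it to $S_m$ through $\hat h_m$ requires a quantitative distortion estimate depending on $K$. A second, smaller error: the final inequality $\ell_{N_m}(\alpha_m)\le \ell_{\dot S_m}(\alpha_m)$ does not follow from ``geodesics minimize length,'' because the conformal boundary with its hyperbolic metric is not a surface sitting inside $N_m$; the correct tool is Bers' inequality $\ell_{N_m}(\alpha_m)\le 2\min\{\ell_{q_m}(\alpha_m),\ell_{q_0}(\alpha_m)\}$, which is what the paper cites.

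The paper's route for (1) sidesteps exactly the difficulty above by working with extremal length rather than hyperbolic length: the annulus $N(v_0)$ (or, when $\hat z_m\in N^{\circ}(v_0)$, the subannulus cut off by the level circle through $\hat z_m$ and containing the far boundary curve) is embedded in $S\setminus\{\hat z_m\}$ with modulus bounded below; its image under the $K$-quasiconformal map $\hat h_m$ is an annulus in $S_m\setminus\{x_m\}$ around the marked representative of $\alpha_m$ with modulus at least $1/(2KE_0)$, so $\mathrm{Ext}_{q_m}(\alpha_m)\le 2KE_0$; Maskit's comparison then bounds $\ell_{q_m}(\alpha_m)$, and Bers' inequality bounds $\ell_{N_m}(\alpha_m)$. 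If you want to keep a hyperbolic-length formulation, you would still need, in effect, this annulus argument (or an equivalent collar/Schwarz-lemma argument producing a bounded-length representative of the correct punctured class at definite distance from $x_m$), so I recommend adopting the modulus estimate; as written, the step ``its hyperbolic length on $\dot S_m$ is likewise uniformly bounded'' is not established.
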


\begin{proof}[Proof of Lemma \ref{lem-1}]
First we pick any simple closed curve $\alpha$ on $S$ and fix it. 
By Theorem \ref{extension},  
${\Phi}(\alpha, z_m) \rightarrow \lambda$ as 
$m \rightarrow \infty$ in $\overline{\mathcal C}(\dot{S})$ and 
$\lambda$ does not depend on $\alpha$.

Next we produce a sequence of curves which satisfies (1) and (2) as follows.
Let $S_m$ be the underlying Riemann surface for $p_m$ 
and $\hat{h}_m$ the Teichm\"uller map from $S$ onto $S_m$.  
Then $p_m=( S_m, \hat{h}_m)$.  
Take $\{ f_m \}_{m=1}^{\infty} \subset {\rm Diff}_0(S)$ with
$\tilde{\rm ev}(f_m)=z_m$.
Then the point $[S_m - \{ \hat{h}_m(\hat{z}_m) \}, \hat{h}_m \circ f_m]$ 
represents $q_m$ in $T(\dot{S})$ 
where $\hat{z}_m$ is the image in $S$ of $z_m$ via the projection $U \rightarrow S$. 
We choose $\alpha_m$ to be $\tilde{\Phi}(\alpha, f_m)$ 
if $\hat{z}_m$ is not contained 
in $N^{\circ}( \alpha )$, 
and 
otherwise let $\alpha_m$ be a vertex of $\tilde{\Phi}(\alpha, f_m)$ 
with weight at least $1/2$ 
in barycentric coordinates on the edge of $\tilde{\Phi}(\alpha, f_m)$ (cf. (\ref{tilde-Phi})).   

We show that the sequence $\{ \alpha_m \}_{m=1}^{\infty}$ satisfies (1) and (2).
By Theorem \ref{extension},  
$\tilde{\Phi}(\alpha, f_m)={\Phi}(\alpha, z_m) \rightarrow \lambda$ as $m \rightarrow \infty$ in  
$\overline{\mathcal C}(\dot{S})$, which implies (2).

To see (1), 
first we set
$$
E_0=1/{\rm Mod}( N( \alpha ) ). 
$$
Suppose that $\hat{z}_m=f_m(\hat{z}_0) \not\in N^{\circ}( \alpha )$.
Then the interior of the annulus $N( \alpha )$ is embedded in $S - \{ \hat{z}_m \}$.
Let $p_0$ be the besepoint of $T(S)$. 
Since $\{ d_T( p_m, p_\infty )\}_{m=1}^{\infty}$ is a bounded sequence, 
by using the triangle inequality 
we see that 
$\{ d_T( p_m, p_0 ) \} _{m=1}^{\infty}$ is also a bounded sequence. 
Hence we may assume that $K(\hat{h}_m) <K$ for every $m$ with a sufficiently large $K(>1)$.  
Since every $\hat{h}_m$ satisfies 
$$
{\rm Mod}( \hat{h}_m( N( \alpha ) ) )\geqq 1/(KE_0), 
$$
we obtain
\begin{equation}
\label{eq:Ext1}
{\rm Ext}_{ q_m }( \alpha_m ) \leqq KE_0.
\end{equation}

Suppose $\hat{z}_m\in N^\circ( \alpha )$.
Let $\alpha^*$ be the core geodesic of $N( \alpha )$ 
and denote by $\alpha^{\pm}$ the components of $\partial N( \alpha )$. 
Take a conformal (not isometric) coordinates
$$
g_m:\alpha^*\times [-\epsilon( \alpha ),\epsilon( \alpha )]\to N( \alpha )
$$
such that $\alpha^* \times \{ 0 \}$ maps to the core geodesic of $N( \alpha )$
and for each $t$,
$\alpha^* \times \{ t \}$ is sent to the equidistant circle to the core geodesic.
Let $t_m \in [-\epsilon( \alpha ), \epsilon( \alpha ) ]$ such that 
$\hat{z}_m \in g_m( \alpha^* \times \{ t_m \})$.
We suppose $t_m> 0$.
The case $t_m\leqq 0$ can be dealt with the same manner.

Let $A_m$ be the component of $N( \alpha ) \setminus g_m( \alpha^* \times \{t_m\} )$
which is containing $\alpha^*$.
Since $g_m$ is conformal,
$$
{\rm Mod}(A_m) \geqq {\rm Mod}( N( \alpha ) )/2.
$$
Thus 
$$
{\rm Mod}( \hat{h}_m(A_m) ) \geqq 1/(2KE_0).
$$
By the definition of $\alpha_m$, we have 
\begin{equation}
\label{eq:Ext2}
{\rm Ext}_{ q_m }(\alpha_m)={\rm Ext}_{ q_m }( f_m^{-1}(\alpha^{-}) ) \leqq 2KE_0.
\end{equation}

From \eqref{eq:Ext1} and \eqref{eq:Ext2}, 
we conclude that ${\rm Ext}_{q_m}(\alpha_m)$ are bounded above. 
By Maskit's comparizon theorem of \cite{Maskit2}, 
we see that $\ell_{q_m}(\alpha_m)$ are bounded above. 
Here for any point $q=[\dot{R}, \dot{f}] \in T(\dot{S})$ and a nontrivial simple closed curve $\gamma$ on $\dot{S}$
the symbol $\ell_q(\gamma)$ means the length of the geodesic representative of the homotopy class of $\dot{f}(\gamma)$ in the hyperbolic metric on $\dot{R}$. 
Therefore by Bers inequality, we have
$$
\ell_{N_m}( \alpha_m ) \leqq 2\text{min}\{ \ell_{q_m}( \alpha_m), \ell_{q_0}(\alpha_m) \}, 
$$  
and hence $\ell_{N_m}( \alpha_m )$ are uniformly bounded, which implies (1).     \hfill  \bsquare
\end{proof}

\medskip
We now return to the proof of Theorem \ref{main}.
Consider the normalized sequence $\{ \alpha_m/\ell_{ q_0 }(\alpha_m)  \}_{m=1}^{\infty}$. 
This sequence has a convergent subsequence 
(represented by the same indices) to a measured lamination $\nu$, 
which by Theorem 1.4 of \cite{klarreich} has the same support as $\lambda$ 
from Lemma \ref{lem-1} (2). 

For a hyperbolic manifold $N$ with marked homotopy equivalence
$\dot{S}\to N$, and a measured lamination $\xi$ on $\dot{S}$, 
we denote by
$\underline{\ell}_N(\xi)$ the extended length of $\xi$ in $N$ 
(see Brock \cite{brock}). 
Any
quasifuchsian group uniformizing $(q_m,\overline{q}_0)$ admits a natural marked
homotopy equivalence inherited from that of $q_m$.
By Brock's continuity theorem we get 
$$
\underline{\ell}_{N_m}\left( \frac{ \alpha_m }{ \ell_{ q_0 }( \alpha_m) } \right) \rightarrow 
\underline{\ell}_{N\infty}(\nu) 
~~\text{as}~~m \rightarrow \infty
$$
where $N_{\infty}={\mathbb H}^3/\Gamma_{\infty}$ is a marked hyperbolic manifold  
and $\Gamma_{\infty}$ is an algebraic limit 
of the subsequence 
$\{ \Gamma_m \}_{m=1}^{\infty}$.                                                               
(cf. Theorem 2 of \cite{brock}. 
See also Lemma 3.1 of Ohshika \cite{ohshika}). 
On the other hand, from (2) of Lemma \ref{lem-1}, 
because $\alpha_m$ tends to infinity in $\mathcal{C}(\dot{S})$, 
in the fixed metric $q_0$, we must have $\ell_{q_0}(\alpha_m) \to \infty$ 
as $m\to \infty$. 
Therefore, from (1) in Lemma \ref{lem-1}, we have
\begin{eqnarray*}
\underline{\ell}_{N_m}\left( \frac{ \alpha_m}{ \ell_{ q_0 }(\alpha_m) } \right)  & = & 
\frac{1}{ \ell_{ q_0 }(\alpha_m) }  \underline{\ell}_{N_m}( \alpha_m ) \\ 
                                                                             &\rightarrow & 0 ~~~(m \rightarrow \infty), 
\end{eqnarray*}
and thus the length of $\nu$ in $N_{\infty}$ is zero. 
Since the support of $\nu$ contains  $\lambda$ as its support, 
the length of $\lambda$ in $N_{\infty}$ is also zero. 
Hence $\lambda$ is not realizable in $N_{\infty}$. 
Since $\lambda$ is filling, 
it follows $\Gamma_{\infty}$ is a singly degenerate Kleinian group. 
By using Ending lamination theorem for surface groups of \cite{bcm}, 
$\Gamma_{\infty}$ is uniquely determined by $(\lambda, \overline{q}_{0})$ up to conjugation.  
By Theorem \ref{extension}, $\lambda$ depends only on $z_{\infty}$. 
Thus the sequence $\{ q_m \}_{m=1}^{\infty}$ converges 
without depending on the choice of a convergent sequence to $(p_{\infty}, z_{\infty}) \in T(S)  \times {\mathbb A}$.  
Hence we conclude that the map $\varphi \circ r : T(S) \times U \rightarrow T(\dot{S})$ has a continuous extension  
to $T(S) \times {\mathbb A}$. 
\hfill  \bsquare

\vspace{3mm}

\begin{center}
ACKNOWLEDGMENT
\end{center}
The authors wish to thank to the referee for valuable suggestions on the improvement of our proof of Main theorem.


\bibliographystyle{amsplain}

\end{document}